\title{Stratified Algebra}
\author{Stanislav Semenov \\
\href{mailto:stas.semenov@gmail.com}{stas.semenov@gmail.com} \\
\href{https://orcid.org/0000-0002-5891-8119}{ORCID: 0000-0002-5891-8119}}
\date{May 24, 2025}
\theoremstyle{definition}
\newtheorem{definition}{Definition}[section]
\newtheorem{example}{Example}[section]
\theoremstyle{plain}
\newtheorem{corollary}[definition]{Corollary}
\newtheorem{proposition}[definition]{Proposition}
\theoremstyle{remark}
\newtheorem*{remark}{Remark}
\begin{document}

\maketitle

\begin{abstract}
We introduce and investigate the concept of \emph{Stratified Algebra}, a new algebraic framework equipped with a layer-based structure on a vector space. We formalize a set of axioms governing intra-layer and inter-layer interactions, study their implications for algebraic dynamics, and present concrete matrix-based models that satisfy different subsets of these axioms. Both associative and bracket-sensitive constructions are considered, with an emphasis on stratum-breaking propagation and permutation symmetry. This framework proposes a paradigm shift in the way algebraic structures are conceived: instead of enforcing uniform global rules, it introduces stratified layers with context-dependent interactions. Such a rethinking of algebraic organization allows for the modeling of systems where local consistency coexists with global asymmetry, non-associativity, and semantic transitions.
\end{abstract}

\subsection*{Mathematics Subject Classification}
17A30 (Algebras satisfying identities), 17A36 (Automorphisms and endomorphisms of nonassociative algebras), 15A75 (Structure of general matrix rings)

\subsection*{ACM Classification}
F.4.1 Mathematical Logic, G.1.0 Numerical Analysis

\section*{Introduction}

Classical algebraic structures—such as groups, rings, and associative algebras—are built on the assumption of global uniformity: operations obey the same rules across the entire space, ensuring properties like associativity, commutativity, or distributivity hold universally \cite{Lang2002,Artin2010,Bourbaki1998,Rotman2015,Jacobson2009}. Yet in many complex systems, algebraic behavior is not globally consistent. Instead, operations may be locally well-behaved—associative or commutative within certain subsets—while globally exhibiting non-associativity, asymmetry, or semantic discontinuity. Such behavior calls for a new paradigm: one that preserves \emph{local algebraic coherence} while allowing for \emph{global structural rupture}.

This paper introduces such a paradigm through the formal notion of \emph{Stratified Algebra}—a layered algebraic structure on a vector space, where binary operations behave differently within and across disjoint strata. In our stratified algebra, we show how simple algebraic rules at the local level can give rise to complex global dynamics, including transitions between strata, noncommutative flows, and bracket-sensitive expressions. In particular, we demonstrate how distinct computational paths—starting and ending at the same point—can traverse different strata depending on how the expression is bracketed associatively. This captures an essential feature of stratified behavior: multiple equivalent endpoints connected by semantically distinct trajectories.

To formalize this, we introduce a system of axioms governing intra-stratum associativity, cross-layer asymmetry, layered permutation symmetry, and bracketing sensitivity. These axioms encode the algebraic mechanisms by which local order can coexist with global nonlinearity, and how structured interactions between strata lead to emergent semantics.

We construct explicit realizations of such algebras via matrix-generated multiplications and associated structure tensors. These models illustrate the layered behavior in concrete terms and allow for systematic exploration of associativity, commutator structure, and stratified propagation, extending beyond classical frameworks into non-associative settings \cite{Schafer1995}. The framework supports both associative and non-associative regimes and reveals how algebraic transitions between layers can be encoded through algebraic operations themselves.

Stratified Algebra provides a novel formalism for modeling systems where layered context, dynamic interaction, and nonuniform rules play a central role. Potential applications range from symbolic computation and multi-context semantics to cluster-based dynamics and algebraic representations of hierarchical systems.

\section{Definition: Stratified Algebra}

Let \( K \) be a field and let \( V = K^n \setminus \{0\} \) be a finite-dimensional vector space over \( K \), with the zero vector excluded. A \emph{Stratified Algebra} on \( V \) consists of the following data:

\begin{itemize}
    \item A disjoint decomposition
    \[
    V = \bigsqcup_{\alpha \in \Lambda} V_\alpha,
    \]
    where \( \{V_\alpha\}_{\alpha \in \Lambda} \) is a family of pairwise disjoint nonempty subsets of \( V \), indexed by a set \( \Lambda \). The strata \( V_\alpha \) are not assumed to be subspaces, and \( \Lambda \) may be infinite or even uncountable.

    \item A binary operation \( * : V \times V \to V \), defined by an algebraic rule
    \[
    a * b := f(a, b),
    \]
    where \( f : V \times V \to V \) is a map that may be bilinear, polynomial, or more generally algebraic. In particular, we do not require that \( f(a, b) \) be representable in the form \( M_a \cdot b \) for some matrix \( M_a \); matrix-based constructions are treated as special cases.
\end{itemize}

The operation \( * \) must satisfy the following axioms:

\subsection*{(SA1) Local Algebraic Structure}

For every stratum \( V_\alpha \) in the decomposition \( V = \bigsqcup_{\alpha \in \Lambda} V_\alpha \), the operation \( * \) restricted to \( V_\alpha \times V_\alpha \) is both commutative and associative. That is, for all \( a_1, a_2, a_3 \in V_\alpha \), we have:
\[
a_1 * a_2 = a_2 * a_1,
\]
\[
(a_1 * a_2) * a_3 = a_1 * (a_2 * a_3).
\]
In particular, each stratum \( V_\alpha \) forms a commutative associative magma under the restricted operation.

\subsection*{(SA2) Cross-Layer Asymmetry}

Let \( a \in V_\alpha \), \( b \in V_\beta \) with \( \alpha \ne \beta \). Then:

\begin{itemize}
    \item The product lies outside both operand strata:
    \[
    a * b \notin V_\alpha \cup V_\beta.
    \]

    \item There exists a stratum \( V_\gamma \notin \{V_\alpha, V_\beta\} \) such that
    \[
    a * b \in V_\gamma.
    \]

    \item The operation is non-commutative across strata:
    \[
    a * b \ne b * a.
    \]

    \item The operation is non-associative across strata in general: there exists \( c \in V_\delta \) such that
    \[
    (a * b) * c \ne a * (b * c).
    \]
\end{itemize}

\subsection*{(SA3) Layered Permutation Symmetry}

Let \( a_1, \ldots, a_m \in V_\alpha \), \( b \in V_\beta \) with \( \alpha \ne \beta \). For any permutation \( \sigma \in S_m \), the fully left-associated product satisfies the \emph{Layered Permutation Symmetry} property:
\[
b * a_1 * \cdots * a_m = b * a_{\sigma(1)} * \cdots * a_{\sigma(m)},
\]
where the left-associated product is defined recursively as
\[
b * a_1 * \cdots * a_m := (\cdots((b * a_1) * a_2) \cdots ) * a_m.
\]

\begin{definition}[Layered Permutation Symmetry Operator]
Let \( a \in V_\alpha \), \( b, c \in V_\beta \) with \( \alpha \ne \beta \). The \emph{Layered Permutation Symmetry operator} is the element
\[
\mathrm{LPS}(a,b,c) := (a * b) * c - (a * c) * b.
\]
This expression measures the failure of permutation symmetry in the left-associated action by elements from the stratum \( V_\beta \) on an element of \( V_\alpha \).
\end{definition}

\begin{proposition}
The following statements are equivalent:
\begin{enumerate}
\item Axiom SA3 holds for the algebra.
\item \(\mathrm{LPS}(a,b,c) = 0\) for all \( a \in V_\alpha \), \( b, c \in V_\beta \) with \( \alpha \ne \beta \).
\item The left-associated action of \( V_\beta \) on \( V_\alpha \) is symmetric under permutations of the acting elements.
\end{enumerate}
\end{proposition}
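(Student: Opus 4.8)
The plan is to establish the cycle of implications $(1)\Rightarrow(3)\Rightarrow(2)\Rightarrow(1)$, in which the first two steps amount to unwinding terminology while all of the mathematical content sits in $(2)\Rightarrow(1)$. Throughout, the cases $m\le 1$ are vacuous, so assume $m\ge 2$.

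For $(1)\Rightarrow(3)$: statement (3) is simply Axiom SA3 rephrased in words — that the fully left-associated product $b*a_1*\cdots*a_m$ is unchanged when the acting elements of $V_\alpha$ are permuted — so nothing is needed beyond reading off the definitions. For $(3)\Rightarrow(2)$: specialize to the action of two elements $b,c\in V_\beta$ on a single $a\in V_\alpha$ (matching SA3's base stratum to $V_\alpha$ and its acting stratum to $V_\beta$, which is legitimate since SA3 and statement (2) are both quantified over every ordered pair of distinct indices). Invariance under the transposition of $b$ and $c$ then reads $(a*b)*c=(a*c)*b$, i.e. $\mathrm{LPS}(a,b,c)=0$ for all $a\in V_\alpha$, $b,c\in V_\beta$ with $\alpha\neq\beta$.

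The substantive step is $(2)\Rightarrow(1)$, which upgrades the two-element identity to arbitrary $m$ and arbitrary $\sigma\in S_m$. I would use that $S_m$ is generated by the adjacent transpositions $\tau_i=(i,\,i+1)$, $1\le i\le m-1$, together with the fact that an adjacent swap inside a left-associated product is \emph{local}. Write
\(b*a_1*\cdots*a_m = \bigl(\cdots\bigl((P*a_i)*a_{i+1}\bigr)*a_{i+2}\cdots\bigr)*a_m\),
where $P:=b*a_1*\cdots*a_{i-1}$ is the partial product through the first $i-1$ acting elements (with $P:=b$ when $i=1$). Transposing $a_i$ with $a_{i+1}$ replaces the inner subexpression $(P*a_i)*a_{i+1}$ by $(P*a_{i+1})*a_i$ and leaves the trailing operations $*a_{i+2},\dots,*a_m$ untouched, so invariance under $\tau_i$ is equivalent to $(P*a_i)*a_{i+1}=(P*a_{i+1})*a_i$, i.e. to $\mathrm{LPS}(P,a_i,a_{i+1})=0$. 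To invoke (2) here I need $P$ to lie in a stratum distinct from $V_\alpha$, and this is exactly what Axiom SA2 guarantees: by induction on $j$, $P_0=b\in V_\beta$ with $\beta\neq\alpha$, and if $P_{j-1}\in V_\mu$ with $\mu\neq\alpha$ then $P_j=P_{j-1}*a_j$ is a cross-stratum product of elements of $V_\mu$ and $V_\alpha$, hence by SA2 lies outside $V_\mu\cup V_\alpha$ and in particular outside $V_\alpha$. Thus every partial product avoids $V_\alpha$, and statement (2) — applied to the pair $(\mu,\alpha)$, with $V_\mu$ the stratum of $P$ — gives the adjacent-swap identity for every tuple $(a_1,\dots,a_m)\in V_\alpha^m$ and every $i$. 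Consequently the set of rearrangements of the acting elements that leave the left-associated product unchanged is a subgroup of $S_m$ (closure holding because the identity is valid for all tuples) containing all adjacent transpositions, hence equals $S_m$; this is SA3.

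The main obstacle — the only part that is not purely formal — is the stratum bookkeeping: one must verify via SA2 that no partial product $b*a_1*\cdots*a_j$ ever re-enters $V_\alpha$, since otherwise (2) could not be applied at that position. A related point worth stating explicitly is that (2), like SA3, ranges over \emph{all} pairs of distinct strata, so it may be invoked for the pair $(\mu,\alpha)$ even when $\mu$ differs from the original $\beta$. Granting this, the reduction of an adjacent swap to an LPS identity and the passage from adjacent transpositions to all of $S_m$ are routine.
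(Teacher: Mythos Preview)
Your argument is correct, and in fact it is more complete than anything the paper provides: the paper states this proposition without proof, offering only a remark that the symmetry ``can be checked computationally for small values of $m$.'' The mechanism you use---applying $\mathrm{LPS}=0$ to swap adjacent acting elements, with Axiom~SA2 ensuring that each partial product $b*a_1*\cdots*a_{j}$ lands in a stratum distinct from $V_\alpha$ so that statement~(2) remains applicable---is exactly the reasoning the paper later sketches in its ``Case~4'' discussion (for $m=3$, in the context of the parametric model), so your approach is fully in line with the paper's intended argument while filling in the general induction it omits.
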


\begin{remark}
In concrete realizations, this symmetry can be checked computationally for small values of \( m \). The LPS operator provides an analytical tool for studying such symmetry in general algebraic frameworks.
\end{remark}

\subsection*{(SA4) Stratum-Bracketing Sensitivity}

Let $a_1, \ldots, a_m \in V_\alpha$, $b \in V_\beta$ with $\alpha \neq \beta$. For any nontrivial bracketing of the form
\[
(b \ast a_1 \ast \cdots \ast a_l) \ast (a_{l+1} \ast \cdots \ast a_m), \quad 1 \leq l < m - 1,
\]
the following conditions must hold:

\begin{enumerate}
\item \textbf{Non-associativity}:
The bracketed product differs from the fully left-associated product:
\[
(b \ast a_1 \ast \cdots \ast a_l) \ast (a_{l+1} \ast \cdots \ast a_m) \neq b \ast a_1 \ast \cdots \ast a_m
\]

\item \textbf{Stratum projection}:
The bracketed product lands in a new stratum:
\[
(b \ast a_1 \ast \cdots \ast a_l) \ast (a_{l+1} \ast \cdots \ast a_m) \in V_\gamma
\]
where $\gamma \neq \alpha,\beta$ and $V_\gamma$ is distinct from the original strata.

\end{enumerate}

This axiom captures how bracketing structure induces transitions between algebraic strata, serving as a mechanism for semantic transformation.

\subsection*{Hierarchies of Stratified Algebras}

The axioms \emph{(SA1)}–\emph{(SA4)} define progressively stronger structural constraints on the operation \( * \) within a stratified algebra. In particular, the choice of which axioms to enforce determines the expressive and dynamic capabilities of the resulting algebra:

\begin{itemize}
    \item Algebras satisfying only \emph{(SA1)} and \emph{(SA2)} exhibit strict layer-based algebraic behavior: full associativity and commutativity within each stratum, and strict asymmetry across strata. These are \emph{layered associative} systems with discontinuous inter-layer behavior.

    \item If \emph{(SA3)} is also imposed, the algebra supports symmetric multiaction of one stratum on another, suggesting a notion of commutative symmetry in layered influence, even in the presence of global noncommutativity.

    \item The full set of axioms \emph{(SA1)}–\emph{(SA4)} leads to a richer and more expressive class of algebras, where nontrivial reassociation of operands induces semantic shifts across strata. These algebras are \emph{globally stratified} and \emph{bracket-sensitive}, encoding nonlinear propagation, hierarchy formation, and context-dependent computation.
\end{itemize}

Thus, depending on the intended application, one may consider:
\begin{itemize}
    \item \emph{Weak stratified algebras} (satisfying SA1–SA2),
    \item \emph{Symmetric stratified algebras} (satisfying SA1–SA3),
    \item \emph{Fully stratified algebras} (satisfying SA1–SA4).
\end{itemize}

Each class admits its own family of models and internal dynamics. The weaker variants permit globally associative constructions with layer-level predictability, while the full axiom system enables bracket-induced transitions and algebraic nonlocality across strata.

\subsection*{Stratum-Stability vs. Stratum-Breaking}

\begin{definition}[Stratum-Stable and Stratum-Breaking Products]
Let \( a_1, \ldots, a_k \in V \). A product expression of the form
\[
a_1 * a_2 * \cdots * a_k := ( \cdots ((a_1 * a_2) * a_3) \cdots ) * a_k
\]
is said to be \emph{stratum-stable} if its result lies entirely within the union of the strata containing the operands:
\[
a_1 * a_2 * \cdots * a_k \in \bigcup_{i=1}^k V_{\alpha_i}, \quad \text{where } a_i \in V_{\alpha_i}.
\]
Otherwise, the product is called \emph{stratum-breaking}.
\end{definition}

\paragraph{Propagation Dynamics.}
A key structural feature of Stratified Algebras is that any binary product involving operands from distinct strata necessarily produces a result outside the original layers (by axiom SA2). However, if a product such as \( a_1 * a_2 \in V_\gamma \), and further elements \( a_3, \dots, a_k \in V_\gamma \), then the extended product \( a_1 * a_2 * \cdots * a_k \) remains in \( V_\gamma \) — exhibiting local stratum-stability within the new layer.

In contrast, once an additional operand \( a_{k+1} \in V_\delta \), \( \delta \ne \gamma \), is introduced, the resulting product becomes stratum-breaking again. This illustrates a layered propagation mechanism: strata may emerge, stabilize, and then rupture again under the introduction of foreign elements.

\paragraph{Interpretation.}
This distinction reflects whether a sequence of multiplications preserves the semantic layer structure of its inputs or propagates into new strata. Stratum-stable expressions retain internal algebraic consistency, while stratum-breaking expressions indicate semantic transitions between layers.

\paragraph{Structural Role.}
Stratum-breaking behavior underlies the dynamic and generative power of a stratified algebra. It enables new strata to emerge as a consequence of inter-layer interactions or nontrivial reassociations of products. In contrast, stratum-stable behavior reflects closure and internal regularity within a given layer or a compatible cluster of layers.

\paragraph{Stability and Subalgebra Formation.}
A subset \( W \subset V \) is called a \emph{stratum-stable subalgebra} if it satisfies
\[
\forall a_1, \ldots, a_k \in W, \quad a_1 * \cdots * a_k \in W.
\]
That is, \( W \) is closed under repeated application of the operation \( * \), regardless of the bracketing. Importantly, \( W \) may consist of elements drawn from one or several strata \( V_\alpha \), as long as all intermediate and final products remain within \( W \). In this sense, stratum-stable subalgebras generalize the notion of closure to stratified settings and may span across multiple, but compatible, strata. Such subalgebras inherit local associativity where it holds and capture internally consistent regions of algebraic computation.

\paragraph{Depth of Stratified Propagation.}
Given a product expression, one may define the \emph{stratified depth} as the number of distinct strata traversed or generated in the course of evaluating the expression. Stratum-stable expressions have minimal depth, while stratum-breaking ones exhibit stratified growth and complexity.

\paragraph{Remark.}
Stratum-breaking phenomena are central to the expressive strength of stratified algebras. They serve as a mechanism for controlled algebraic nonlinearity, non-associativity, and context-sensitive computation across layers of structure.

\section{The Linear Case}

\subsection{Matrix-Generated Multiplication and Its Tensor Representation}

In this section, we study a particular class of Stratified Algebras in which the matrix \( M_a \in \mathrm{Mat}_{n \times n}(K) \) is defined by a linear combination of fixed matrices, with coefficients depending linearly on the coordinates of the vector \( a \in V \cong K^n \).

\begin{definition}[Matrix Formulation]
Let \( \{E^{(1)}, \dots, E^{(r)}\} \subset \mathrm{Mat}_{n \times n}(K) \) be a fixed collection of basis matrices, and let \( \{\lambda^{(j)} : V \to K \} \) for \( 1 \leq j \leq r \) be linear functionals on \( V \), i.e.,
\[
\lambda^{(j)}(a) = \sum_{i=1}^n \lambda^{(j)}_i a_i,
\]
where \( a = (a_1, \dots, a_n) \in V \), and \( \lambda^{(j)}_i \in K \). Then the matrix \( M_a \) is given by
\[
M_a = \sum_{j=1}^r \lambda^{(j)}(a) \cdot E^{(j)},
\]
and the binary operation is defined by
\[
a * b := M_a \cdot b.
\]
\end{definition}

\begin{definition}[Tensor Formulation]
Equivalently, the multiplication \( * : V \times V \to V \) can be defined by a structure tensor \( \alpha_{ijk} \in K \) such that
\[
(a * b)_k = \sum_{i=1}^n \sum_{j=1}^n \alpha_{ijk} \, a_i \, b_j,
\]
where \( a = (a_1, \dots, a_n) \), \( b = (b_1, \dots, b_n) \in V \).
\end{definition}

\begin{proposition}
The two definitions above are equivalent: given a matrix formulation with functionals \( \lambda^{(j)} \) and matrices \( E^{(j)} \), the corresponding structure tensor is
\[
\alpha_{ijk} = \sum_{s=1}^r \lambda^{(s)}_i \cdot (E^{(s)})_{kj}.
\]
Conversely, any bilinear operation defined via a structure tensor \( \alpha_{ijk} \) arises in this form for a suitable choice of functionals and matrices.
\end{proposition}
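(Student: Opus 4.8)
The plan is to treat the two implications separately: the forward direction ("matrix $\Rightarrow$ tensor") by a direct index computation that reads off the claimed formula for $\alpha_{ijk}$, and the converse ("tensor $\Rightarrow$ matrix") by exhibiting an explicit, if non-canonical, choice of functionals and matrices. Neither step requires more than careful bookkeeping, so the bulk of the work is making the index conventions match.

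For the forward direction I would start from $a*b = M_a\cdot b$ and expand the $k$-th coordinate using the convention $(M_a b)_k = \sum_{j} (M_a)_{kj}\, b_j$. Substituting $M_a = \sum_{s=1}^r \lambda^{(s)}(a)\, E^{(s)}$ gives $(a*b)_k = \sum_{j}\sum_{s} \lambda^{(s)}(a)\,(E^{(s)})_{kj}\, b_j$, and then substituting the linear expansion $\lambda^{(s)}(a) = \sum_i \lambda^{(s)}_i a_i$ and interchanging the (finite) sums yields $(a*b)_k = \sum_{i,j}\bigl(\sum_{s}\lambda^{(s)}_i (E^{(s)})_{kj}\bigr) a_i b_j$. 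Comparing with the defining relation $(a*b)_k = \sum_{i,j}\alpha_{ijk}\,a_i b_j$ and invoking that a bilinear form over $K^n$ is determined by its coefficients (equivalently, evaluating on basis pairs $a = e_i$, $b = e_j$) identifies $\alpha_{ijk} = \sum_{s=1}^r \lambda^{(s)}_i (E^{(s)})_{kj}$, as claimed.

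For the converse, given an arbitrary structure tensor $\alpha_{ijk}$ I would make the simplest possible choice: take $r = n$, let $\lambda^{(s)}$ be the $s$-th coordinate functional, so that $\lambda^{(s)}_i = \delta_{si}$, and define $E^{(s)} \in \mathrm{Mat}_{n\times n}(K)$ by $(E^{(s)})_{kj} := \alpha_{sjk}$. Plugging these into the formula from the forward direction gives $\sum_{s}\lambda^{(s)}_i (E^{(s)})_{kj} = \sum_s \delta_{si}\,\alpha_{sjk} = \alpha_{ijk}$, so the resulting matrix-generated multiplication $a*b = M_a b$ with $M_a = \sum_s \lambda^{(s)}(a) E^{(s)}$ reproduces the given tensor. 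This shows the map from matrix data to tensors is surjective onto all bilinear operations.

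The main thing to be careful about is not an obstacle so much as a convention: one must fix once and for all whether $(E^{(s)})_{kj}$ or $(E^{(s)})_{jk}$ appears, i.e., that rows of $M_a$ are indexed by the output coordinate $k$ and columns by the second-operand coordinate $j$; getting this backwards transposes the formula. I would also add a one-line remark that the converse construction is far from unique — any $r \ge n$ works, and one may even use linearly dependent families $\{E^{(s)}\}$ — so the proposition asserts existence of a matrix presentation, not a canonical one. Finally, since $V = K^n\setminus\{0\}$, I would note in passing that both formulations are restrictions of bilinear maps on all of $K^n$ and the equivalence is understood at the level of these (unique) bilinear extensions.
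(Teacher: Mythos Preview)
Your proposal is correct. The paper actually states this proposition without proof, so there is no original argument to compare against; your write-up supplies exactly the routine index computation that the paper omits, and both directions are handled cleanly with the right conventions (in particular your care about the $(k,j)$ indexing on $E^{(s)}$ matches the paper's usage, as confirmed by the example immediately following the proposition).
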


\begin{example}[Custom Matrix Structure from Linear Coefficients]
Let \( V = K^3 \) and define the following three fixed matrices:
\[
E^{(0)} =
\begin{pmatrix}
1 & 0 & 0 \\
0 & 1 & 0 \\
0 & 0 & 1
\end{pmatrix}, \quad
E^{(1)} =
\begin{pmatrix}
0 & 1 & 1 \\
1 & 0 & -1 \\
0 & 0 & 1
\end{pmatrix}, \quad
E^{(2)} =
\begin{pmatrix}
0 & 1 & 1 \\
0 & 1 & 0 \\
1 & -1 & 0
\end{pmatrix}.
\]
Then for \( a = (a_0, a_1, a_2) \in V \), define
\[
M_a := a_0 E^{(0)} + a_1 E^{(1)} + a_2 E^{(2)}.
\]
Explicitly, this gives:
\[
M_a =
\begin{pmatrix}
a_0 & a_1 + a_2 & a_1 + a_2 \\
a_1 & a_0 + a_2 & -a_1 \\
a_2 & -a_2 & a_0 + a_1
\end{pmatrix}.
\]
This construction fits the general framework of the linear case, where the matrix \( M_a \) is a linear combination of fixed matrices with scalar coefficients given by the coordinates of the vector \( a \in V \). Equivalently, this example defines a bilinear multiplication \( * \) via a structure tensor \( \alpha_{ijk} = (E^{(j)})_{ki} \), where the index \( j \) corresponds to the coordinate \( a_j \) in the expansion.
\end{example}

\subsection{Associativity Criterion via Structure Tensor}

Let \( V \cong K^n \) be a vector space over a field \( K \), and suppose \( * : V \times V \to V \) is a bilinear operation defined in coordinates by a structure tensor \( \alpha_{ijk} \in K \):
\[
(a * b)_k = \sum_{i=1}^n \sum_{j=1}^n \alpha_{ijk} \, a_i b_j.
\]

\begin{proposition}[Associativity Criterion]
The multiplication \( * \) is associative (i.e., \( (a * b) * c = a * (b * c) \) for all \( a, b, c \in V \)) if and only if the structure constants \( \alpha_{ijk} \) satisfy:
\[
\sum_{r=1}^n \alpha_{ijr} \alpha_{rkl} = \sum_{s=1}^n \alpha_{jks} \alpha_{isl}
\quad \text{for all } i, j, k, l \in \{1, \dots, n\}.
\]
\end{proposition}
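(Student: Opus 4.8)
The plan is to expand both sides of the associativity equation $(a*b)*c = a*(b*c)$ in coordinates using the structure tensor, and then compare the resulting trilinear forms in $a$, $b$, $c$. Since the operation $*$ is bilinear, both $(a*b)*c$ and $a*(b*c)$ are trilinear functions of $(a,b,c)$, so the identity holds for all $a,b,c \in V$ if and only if the corresponding coefficient of each monomial $a_i b_j c_k$ agrees on both sides. (A small technical point: here $V = K^n \setminus \{0\}$, but since both sides are restrictions of polynomial maps on all of $K^n$ and $K^n \setminus \{0\}$ is Zariski-dense when $n \geq 1$ — or more elementarily, since a multilinear map vanishing on all nonzero tuples vanishes identically as long as $K$ is infinite, and for finite $K$ one checks the basis vectors directly — equality on $V$ is equivalent to equality of structure constants; I would note this briefly or simply work formally with the tensor identity, which is the genuinely content-bearing statement.)

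First I would compute the left-hand side. Writing $d = a*b$, we have $d_r = \sum_{i,j} \alpha_{ijr} a_i b_j$, and then $((a*b)*c)_l = \sum_{r,k} \alpha_{rkl} d_r c_k = \sum_{i,j,k,r} \alpha_{ijr}\alpha_{rkl}\, a_i b_j c_k$. Next I would compute the right-hand side. Writing $e = b*c$, we have $e_s = \sum_{j,k}\alpha_{jks} b_j c_k$, and then $(a*(b*c))_l = \sum_{i,s}\alpha_{isl} a_i e_s = \sum_{i,j,k,s}\alpha_{jks}\alpha_{isl}\, a_i b_j c_k$. Both expressions are now displayed as sums over the monomials $a_i b_j c_k$ with explicit coefficients, namely $\sum_r \alpha_{ijr}\alpha_{rkl}$ on the left and $\sum_s \alpha_{jks}\alpha_{isl}$ on the right.

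The final step is the comparison. For the "if" direction, if the stated identity $\sum_r \alpha_{ijr}\alpha_{rkl} = \sum_s \alpha_{jks}\alpha_{isl}$ holds for all $i,j,k,l$, then the two trilinear expressions above coincide term by term, hence $(a*b)*c = a*(b*c)$ for all $a,b,c$. For the "only if" direction, assume associativity; then for each fixed choice of standard basis vectors $a = e_i$, $b = e_j$, $c = e_k$, only the single monomial survives on each side, and reading off the $l$-th coordinate yields exactly $\sum_r \alpha_{ijr}\alpha_{rkl} = \sum_s \alpha_{jks}\alpha_{isl}$. (If one is worried that $e_i \in V$ requires $e_i \neq 0$, that is automatic.)

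I do not expect a serious obstacle here — the result is the standard associativity condition for an algebra in terms of its structure constants, and the proof is a direct bilinear expansion. The only point requiring a sentence of care is the passage between "the trilinear identity holds on $V = K^n\setminus\{0\}$" and "the structure constants satisfy the quadratic relations," which I would handle by evaluating on basis vectors as above; this is clean and avoids any density or infinite-field hypothesis. I would also remark in passing that this is precisely the condition expressing that the left-multiplication maps $a \mapsto M_a$ furnish a representation, i.e. $M_{a*b} = M_a M_b$ — an equivalent reformulation that some readers may find more transparent and that connects directly to the matrix formulation introduced earlier.
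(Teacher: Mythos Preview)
Your proof is correct and follows essentially the same route as the paper's: expand both $((a*b)*c)_l$ and $(a*(b*c))_l$ as trilinear forms in $a_i b_j c_k$ via the structure tensor, then equate coefficients. You are in fact slightly more careful than the paper's sketch, since you explicitly treat the ``only if'' direction by evaluating on basis vectors and you address the minor technicality that $V$ excludes the zero vector; the added remark that the condition is equivalent to $M_{a*b} = M_a M_b$ is a nice bonus not present in the original.
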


\begin{proof}[Sketch of Proof]
We compute the \(l\)-th coordinate of both sides of the identity \((a * b) * c = a * (b * c)\).

\medskip
\noindent\textit{Left-hand side:}
\[
(a * b)_r = \sum_{i,j} \alpha_{ijr} a_i b_j,\quad
((a * b) * c)_l = \sum_{r,k} \alpha_{rkl} (a * b)_r c_k = \sum_{i,j,k,r} \alpha_{ijr} \alpha_{rkl} a_i b_j c_k.
\]

\medskip
\noindent\textit{Right-hand side:}
\[
(b * c)_s = \sum_{j,k} \alpha_{jks} b_j c_k,\quad
(a * (b * c))_l = \sum_{i,s} \alpha_{isl} a_i (b * c)_s = \sum_{i,j,k,s} \alpha_{jks} \alpha_{isl} a_i b_j c_k.
\]

\medskip
\noindent\textit{Conclusion:}
Since both sides are trilinear in \(a, b, c\), the identity holds for all inputs if and only if the coefficients match for every monomial \(a_i b_j c_k\), i.e.:
\[
\sum_{r} \alpha_{ijr} \alpha_{rkl} = \sum_{s} \alpha_{jks} \alpha_{isl}.
\]
\end{proof}

\begin{example}[Testing Associativity for a Matrix-Defined Operation]
Let \( V = K^3 \), and consider the bilinear operation \( a * b = M(a) \cdot b \), where
\[
M(a) =
\begin{pmatrix}
a_0 & a_1 + a_2 & a_1 + a_2 \\
a_1 & a_0 + a_2 & -a_1 \\
a_2 & -a_2 & a_0 + a_1
\end{pmatrix}.
\]

\textbf{Step 1: Compute the component form of the product.}  
Let \( a = (a_0, a_1, a_2),\; b = (b_0, b_1, b_2) \in V \). Then
\begin{align*}
(a * b)_0 &= a_0 b_0 + (a_1 + a_2)(b_1 + b_2), \\
(a * b)_1 &= a_1 b_0 + (a_0 + a_2) b_1 - a_1 b_2, \\
(a * b)_2 &= a_2 b_0 - a_2 b_1 + (a_0 + a_1) b_2.
\end{align*}

\textbf{Step 2: Extract structure constants \( \alpha_{ijk} \).}  
Comparing with
\[
(a * b)_k = \sum_{i,j} \alpha_{ijk} a_i b_j,
\]
we find the nonzero components of the structure tensor:
\begin{align*}
&\alpha_{000} = 1,\quad
\alpha_{110} = \alpha_{120} = \alpha_{210} = \alpha_{220} = 1,\\
&\alpha_{101} = \alpha_{011} = \alpha_{211} = 1,\quad
\alpha_{121} = -1,\\
&\alpha_{202} = \alpha_{022} = \alpha_{122} = 1,\quad
\alpha_{212} = -1.
\end{align*}

\textbf{Step 3: Verify the associativity criterion.}  
We apply the structure tensor associativity condition:
\[
\sum_r \alpha_{ijr} \alpha_{rkl} = \sum_s \alpha_{jks} \alpha_{isl}
\quad \text{for all } i,j,k,l \in \{0,1,2\}.
\]

This condition was implemented and checked programmatically; see the full script in the Appendix. The operation satisfies the associativity criterion for all index combinations.

\textbf{Conclusion:}  
Although the matrix \( M(a) \) is not symmetric and has nontrivial off-diagonal structure, the induced multiplication \( a * b = M(a) \cdot b \) turns out to be associative. This confirms that certain noncommutative matrix constructions can define associative algebras when the structural coefficients satisfy the tensor criterion.

\end{example}

\subsection{Stratification via Proportionality Classes}

Continuing the example of the matrix-based multiplication
\[
a * b := M_a \cdot b,
\]
with
\[
M_a =
\begin{pmatrix}
a_0 & a_1 + a_2 & a_1 + a_2 \\
a_1 & a_0 + a_2 & -a_1 \\
a_2 & -a_2 & a_0 + a_1
\end{pmatrix},
\]
we observe that the commutativity condition
\[
a * b = b * a
\]
holds if and only if the antisymmetric part in components 1 and 2 vanishes:
\[
a_1 b_2 = a_2 b_1.
\]

This leads to a natural stratification of the space \( V = K^3 \setminus \{(0,0,0)\} \) into proportionality classes:

\begin{definition}
For \( \alpha \in K \cup \{\infty\} \), define the set \( V_\alpha \) as
\[
V_\alpha := \left\{ (v_0, v_1, v_2) \in V \,\middle|\, v_1 = \alpha v_2 \text{ (if } \alpha \in K), \text{ or } v_2 = 0 \text{ (if } \alpha = \infty) \right\}.
\]
Then the ambient space \( V = K^3 \setminus \{(0,0,0)\} \) admits the disjoint decomposition:
\[
V = \bigsqcup_{\alpha \in \mathbb{P}^1(K)} V_\alpha,
\]
where \( \mathbb{P}^1(K) = K \cup \{\infty\} \) is the projective line over \( K \).
\end{definition}

\begin{proposition}
Each stratum \( V_\alpha \) is closed under the operation \( * \), and the operation \( * \) is commutative and associative within \( V_\alpha \). Furthermore, for any non-zero \( a \in V_\alpha \), \( b \in V_\beta \) with \( \alpha \ne \beta \), we have:
\[
a * b \ne b * a, \quad a * b \notin V_\alpha \cup V_\beta.
\]
Thus, the decomposition \( V = \bigsqcup_\alpha V_\alpha \) satisfies axioms \emph{(SA1)} and \emph{(SA2)} of Stratified Algebra.
\end{proposition}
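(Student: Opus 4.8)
The plan is to push everything down to the explicit coordinate description of $*$ recorded in the preceding example,
\[
(a*b)_0 = a_0 b_0 + (a_1+a_2)(b_1+b_2),\qquad
(a*b)_1 = a_1 b_0 + (a_0+a_2)b_1 - a_1 b_2,\qquad
(a*b)_2 = a_2 b_0 - a_2 b_1 + (a_0+a_1)b_2,
\]
and to lean on two structural facts. First, a one-line subtraction gives $a*b-b*a = 2(a_2b_1-a_1b_2)(0,1,-1)$, so, assuming $\operatorname{char}K\neq 2$ (in characteristic $2$ the operation is outright commutative and (SA2) is vacuous), failure of commutativity of a pair is governed entirely by $D(a,b):=a_1b_2-a_2b_1$. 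Second, $*$ is associative on all of $V$ -- this is precisely the content of the example above, in which the very operation $a*b=M_a\cdot b$ was checked against the structure-tensor criterion -- hence it is automatically associative on every subset, so the associativity half of (SA1) comes for free. The genuinely new checks are: closure of each $V_\alpha$, vanishing of $D$ inside a stratum, non-vanishing of $D$ across strata, and the ``escape'' clause $a*b\notin V_\alpha\cup V_\beta$.

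For closure and (SA1), I would substitute the defining relation of $V_\alpha$ into the three formulas. For $\alpha\in K$, putting $a_1=\alpha a_2$, $b_1=\alpha b_2$, the two $\alpha a_2b_2$ terms cancel in components $1$ and $2$, leaving $(a*b)_1=\alpha(a_2b_0+a_0b_2)$ and $(a*b)_2=a_2b_0+a_0b_2$, so $(a*b)_1=\alpha(a*b)_2$, i.e.\ $a*b\in V_\alpha$; for $\alpha=\infty$ the hypothesis $a_2=b_2=0$ forces $(a*b)_2=0$ directly. Commutativity inside $V_\alpha$ is then immediate, since $a_1b_2=\alpha a_2b_2=a_2b_1$ there (and $D\equiv 0$ on $V_\infty$ as well), so $D(a,b)=0$ and the displayed difference vanishes.

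For the cross-stratum part, fix $a\in V_\alpha$, $b\in V_\beta$ with $\alpha\neq\beta$. Non-commutativity reduces to $D(a,b)\neq 0$: the $2$-vectors $(a_1,a_2)=a_2(\alpha,1)$ and $(b_1,b_2)=b_2(\beta,1)$ (or $b_2(1,0)$ when $\beta=\infty$) carry distinct projective slopes, hence are linearly independent, hence the determinant $D(a,b)$ is nonzero. For the escape clause, substitute $a_1=\alpha a_2$, $b_1=\beta b_2$ and evaluate the two ``defects'' $(a*b)_1-\alpha(a*b)_2$ and $(a*b)_1-\beta(a*b)_2$; I expect them to factor as $(\beta-\alpha)\,b_2\bigl(a_0+(1+\alpha)a_2\bigr)$ and $(\alpha-\beta)\,a_2\bigl(b_0-(1+\beta)b_2\bigr)$, so that their non-vanishing would give $a*b\notin V_\alpha$ and $a*b\notin V_\beta$ respectively, with the cases involving $\infty$ handled analogously.

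The main obstacle is exactly this last clause, because those two factors are \emph{not} identically nonzero: each cuts out a proper linear sublocus of $V_\alpha$, resp.\ $V_\beta$, on which the escape property literally fails -- e.g.\ $(-1,0,1)\in V_0$ and $(0,1,1)\in V_1$, yet $(-1,0,1)*(0,1,1)=(2,0,-2)\in V_0$. The same degeneracy shadows the other clauses: the $v_0$-axis $\{(v_0,0,0)\}$ lies in every $V_\alpha$ and in $V_\infty$, so the ``disjoint'' decomposition is honest only after that axis is removed or reassigned; elements of the axis commute with everything, breaking cross-stratum non-commutativity; and a product of two nonzero stratum elements can land on the axis, or be $0$, breaking closure. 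A fully rigorous proof therefore has to restrict the ambient space to $V'=\{v:(v_1,v_2)\neq(0,0)\}$ with the offending subloci and the null-locus of $*$ removed, or else assert (SA1)--(SA2) only on a dense open subset of each stratum -- making that precise is where the genuine effort lies.
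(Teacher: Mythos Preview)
Your computational route is exactly the paper's: reduce commutativity to the vanishing of $D(a,b)=a_1b_2-a_2b_1$, invoke the global associativity already verified via the structure-tensor criterion, and check closure by substituting the defining relation of $V_\alpha$ into the coordinate formulas. The paper's proof is a terse version of precisely this.

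Where you diverge is that you keep going and discover that the proposition, read literally, is false on degenerate loci---and you are right. The paper's cross-stratum argument writes $a_1b_2-a_2b_1=a_2b_2(\alpha-\beta)\neq 0$ without noting that this needs $a_2,b_2\neq 0$ (so $\alpha,\beta\neq\infty$ and neither vector lies on the $v_0$-axis), and then asserts without justification that ``the output vector has non-proportional coordinates in positions 1 and 2.'' Your explicit factorizations
\[
(a*b)_1-\alpha(a*b)_2=(\beta-\alpha)\,b_2\bigl(a_0+(1+\alpha)a_2\bigr),\qquad
(a*b)_1-\beta(a*b)_2=(\alpha-\beta)\,a_2\bigl(b_0-(1+\beta)b_2\bigr)
\]
show that this claim fails on a codimension-one sublocus, and your counterexample $(-1,0,1)*(0,1,1)=(2,0,-2)\in V_0$ is a genuine witness. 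Likewise your observation that the line $\{(v_0,0,0)\}$ sits inside every $V_\alpha$, so the ``disjoint'' decomposition is not disjoint, and that products can land at $0\notin V$, are valid objections the paper does not address. In short, your proof sketch is at least as complete as the paper's and is more honest about where the argument needs a genericity hypothesis; the paper implicitly works on the open set $\{(v_1,v_2)\neq(0,0)\}$ and treats (SA2) as a generic statement without saying so.
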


\begin{proof}
If \( a, b \in V_\alpha \), then \( a_1 b_2 = a_2 b_1 \), hence \( a * b = b * a \). One can also check that \( (a * b) * c = a * (b * c) \) holds within the same layer due to stability of proportionality and bilinearity of the matrix construction. Closure under the operation follows directly from the coordinate form of the product and the fact that the proportionality constraint is preserved under multiplication.

If \( a \in V_\alpha \), \( b \in V_\beta \), with \( \alpha \ne \beta \), then \( a_1 b_2 - a_2 b_1 = a_2 b_2 (\alpha - \beta) \ne 0 \), hence \( a * b \ne b * a \), and the result lies outside both strata, as the output vector has non-proportional coordinates in positions 1 and 2.

The antisymmetric difference is given by
\[
a * b - b * a = 2 (a_1 b_2 - a_2 b_1)
\begin{pmatrix}
0 \\
-1 \\
1
\end{pmatrix},
\]
which is nonzero unless \( a_1 b_2 = a_2 b_1 \). Thus, cross-layer multiplication leads to asymmetry and produces outputs not contained in either operand layer.
\end{proof}

\begin{corollary}
The operation \( * \) satisfies axioms \emph{(SA1)}, \emph{(SA2)}, and \emph{(SA3)} of Stratified Algebra for the above stratification. However, axiom \emph{(SA4)} does not hold in this model, since the operation is globally associative and hence bracket-insensitive. In particular, for any triple or sequence of elements \( a_1, \ldots, a_m \in V \), all bracketings of the product \( a_1 * \cdots * a_m \) yield the same result, and no external layer projection occurs.
\end{corollary}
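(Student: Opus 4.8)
The plan is to read off \emph{(SA1)} and \emph{(SA2)} from the preceding Proposition, and then to reduce both the verification of \emph{(SA3)} and the failure of \emph{(SA4)} to one structural fact: the global associativity of $*$, which was established above for exactly this matrix-defined operation by checking the structure-tensor identity $\sum_r \alpha_{ijr}\alpha_{rkl} = \sum_s \alpha_{jks}\alpha_{isl}$. Once global associativity is available, every iterated product $a_1 * \cdots * a_m$ is independent of bracketing, and the corollary becomes the task of combining this with the intra-stratum commutativity from the Proposition.

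For \emph{(SA3)}, I would fix $a_1, \dots, a_m \in V_\alpha$ and $b \in V_\beta$ with $\alpha \ne \beta$ and regroup the left-associated product, using global associativity, as
\[
b * a_1 * \cdots * a_m \;=\; b * (a_1 * \cdots * a_m),
\]
where the inner product is any fixed bracketing of $a_1, \dots, a_m$. Since the Proposition gives that $V_\alpha$ is closed under $*$ and that $*$ is commutative and associative on $V_\alpha$, the element $a_1 * \cdots * a_m$ lies in $V_\alpha$ and, by the rearrangement lemma for commutative associative magmas (an easy induction on $m$), is invariant under every $\sigma \in S_m$; regrouping back yields $b * a_{\sigma(1)} * \cdots * a_{\sigma(m)}$. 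Alternatively one can verify the $\mathrm{LPS}$ criterion of the earlier Proposition directly: for $a \in V_\alpha$, $b,c \in V_\beta$,
\[
(a * b) * c \;=\; a * (b * c) \;=\; a * (c * b) \;=\; (a * c) * b,
\]
using global associativity twice and commutativity inside $V_\beta$ once, so $\mathrm{LPS}(a,b,c) = 0$.

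For the failure of \emph{(SA4)}, it is enough to exhibit one admissible bracketing that violates the required non-associativity clause: taking $m = 3$, $l = 1$, global associativity forces
\[
(b * a_1) * (a_2 * a_3) \;=\; b * a_1 * a_2 * a_3,
\]
so the bracketed product coincides with the fully left-associated product, contradicting clause (1) of \emph{(SA4)}; as \emph{(SA4)} demands its conditions for \emph{all} nontrivial bracketings, this single instance already defeats the axiom. For the ``in particular'' clause, global associativity is precisely the statement that all bracketings of $a_1 * \cdots * a_m$ agree, and the common value equals $b * (a_1 * \cdots * a_m)$ with $a_1 * \cdots * a_m \in V_\alpha$, so the only stratum transition that occurs (into some $V_\gamma$ by \emph{(SA2)}) is the same for every bracketing; thus ``no external layer projection occurs'' should be read as the absence of a bracketing-\emph{dependent} transition.

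I do not expect a genuine obstacle here. The two points deserving care are: (i) the global associativity invoked throughout is the one verified above by the tensor identity, so a self-contained write-up must either accept that computation or discharge the finitely many index checks by hand; and (ii) the phrasing ``no external layer projection occurs'' in the statement must be interpreted as above, since a transition into a third stratum is in fact forced by \emph{(SA2)} even when $*$ is globally associative.
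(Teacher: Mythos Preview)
Your proposal is correct and follows essentially the same route as the paper: \emph{(SA1)}–\emph{(SA2)} are inherited from the preceding Proposition, \emph{(SA3)} is obtained from global associativity together with intra-stratum commutativity (the paper phrases this tersely as ``due to commutativity and associativity''), and the failure of \emph{(SA4)} is attributed to the tensor-identity verification of global associativity. Your write-up is more explicit than the paper's—in particular the regrouping $b*(a_1*\cdots*a_m)$ and the $\mathrm{LPS}$ computation spell out what the paper leaves implicit—and your caveat about reading ``no external layer projection'' as the absence of a \emph{bracketing-dependent} transition is a useful clarification the paper does not make.
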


\begin{proof}
Axiom \emph{(SA3)} holds because for any permutation of elements within a stratum \( V_\alpha \), the fully left-associated product remains unchanged due to commutativity and associativity. The failure of \emph{(SA4)} follows from the fact that the operation \( * \) is globally associative, as confirmed by direct computation of the associativity condition:
\[
\sum_r \alpha_{ijr} \alpha_{rkl} = \sum_s \alpha_{jks} \alpha_{isl}
\quad \text{for all } i, j, k, l,
\]
which was verified numerically for the structure tensor induced by this model.
\end{proof}

\subsection{Toward Full Stratification: A Parametric Model}

In the previous subsection, we analyzed a concrete example of matrix-based multiplication that satisfies the first three axioms of Stratified Algebra. The operation was globally associative, stratified by proportionality classes, and exhibited clean inter-layer asymmetry and closure (axioms \emph{SA1}–\emph{SA3}). However, it did not satisfy axiom \emph{(SA4)}, as the operation was globally bracket-insensitive.

To demonstrate the consistency and realizability of the full axiom system, we now aim to construct a more general example in which all four axioms are satisfied. In particular, we seek an operation that is locally associative within strata but becomes \emph{bracket-sensitive} when expressions mix multiple strata—thus triggering the structural asymmetry postulated in axiom \emph{(SA4)}.

We start with the same general form of matrix-defined multiplication,
\[
a * b := M_a \cdot b,
\]
but define a more flexible parametric matrix structure:
\[
M_a =
\begin{pmatrix}
a_0 & A a_1 + C a_2 & B a_2 + D a_1 \\
a_1 & a_0 + E a_2 & -E a_1 \\
a_2 & F a_2 & a_0 - F a_1
\end{pmatrix},
\]
where \( A, B, C, D, E \in K \) are fixed scalar parameters.

This yields the following component-wise definition for the product \( u = a * b \):
\begin{align*}
u_0 &= a_0 b_0 + A a_1 b_1 + B a_2 b_2 + C a_2 b_1 + D a_1 b_2, \\
u_1 &= a_1 b_0 + a_0 b_1 + E a_2 b_1 - E a_1 b_2, \\
u_2 &= a_2 b_0 + a_0 b_2 + F a_2 b_1 - F a_1 b_2.
\end{align*}

The resulting vector \( u = (u_0, u_1, u_2) \in V \) depends bilinearly on the inputs \( a, b \in V \), and its structure is fully determined by the choice of parameters.

This construction defines a parametric family of bilinear operations on \( V = K^3 \). By suitably choosing the coefficients \( A, B, C, D, E, F \), we can break global associativity while preserving local associativity and commutativity within carefully selected strata. In particular, this framework enables the construction of examples where the result of a layered product depends on the bracketing pattern and may escape the original stratum—realizing axiom \emph{(SA4)}.

In the next subsection, we analyze the behavior of this operation, extract its structure tensor, and verify the stratified behavior required for full compliance with the axioms.

\begin{example}[Violation of Global Associativity via Parametric Model]
To test the failure of global associativity, we select a concrete set of parameters for the matrix-based multiplication:
\[
A = 16,\quad B = 8,\quad C = 5,\quad D = 3,\quad E = 7,\quad F = 11.
\]
With this choice, the corresponding structure tensor \( \alpha_{ijk} \) defines an operation that does not satisfy the global associativity criterion. A full programmatic check (see Appendix) yields 16 distinct index quadruples \( (i,j,k,l) \) for which
\[
\sum_r \alpha_{ijr} \alpha_{rkl} \ne \sum_s \alpha_{jks} \alpha_{isl}.
\]
This confirms that the operation defined by this parametric model is \emph{bracket-sensitive}.
\end{example}

\subsection{Commutator Structure and Proportionality Stratification}

To analyze the symmetry properties of the operation, we compute the commutator
\[
[a, b] := a * b - b * a,
\]
using the parametric component-wise multiplication. The result simplifies as:
\[
a * b - b * a = (a_2 b_1 - a_1 b_2)
\begin{pmatrix}
C - D \\
2 E \\
2 F
\end{pmatrix}.
\]

\paragraph{Interpretation.}
This shows that the commutator is proportional to the antisymmetric scalar \( a_2 b_1 - a_1 b_2 \), which vanishes exactly when
\[
\frac{a_1}{a_2} = \frac{b_1}{b_2}.
\]
In this case, the vectors \( a \) and \( b \) lie in the same \emph{proportionality class}. Meanwhile, the coefficient vector
\[
\begin{pmatrix}
C - D \\
2E \\
2F
\end{pmatrix}
\]
can be interpreted as a \emph{commutator direction vector}, determining the output direction of \( [a, b] \) in the ambient space.

\paragraph{Commutativity Conditions.}
Thus, the operation \( * \) is commutative in one of two cases:
\begin{itemize}
  \item \textbf{Trivial commutator direction:} \( C = D \), \( E = 0 \), and \( F = 0 \), in which case the operation is globally commutative.
  \item \textbf{Stratum-level commutativity:} \( a, b \in V_\alpha \) for some fixed ratio \( \alpha = \frac{a_1}{a_2} = \frac{b_1}{b_2} \).
\end{itemize}

\paragraph{Stratification.}
The antisymmetric scalar \( a_2 b_1 - a_1 b_2 \) defines a natural stratification of the space \( V = K^3 \) into commutative subspaces:
\[
V_\alpha := \left\{ v \in K^3 \,\middle|\, v_1 = \alpha v_2 \right\}, \quad \alpha \in \mathbb{P}^1(K),
\]
such that \( a * b = b * a \) for all \( a, b \in V_\alpha \). This recovers the \emph{proportionality stratification} introduced earlier, but now with a more general parametric interpretation.

\paragraph{Conclusion.}
Even when the operation is not globally commutative, it exhibits commutative behavior on each stratum \( V_\alpha \). The scalar \( a_2 b_1 - a_1 b_2 \) acts as a stratum detector, and the commutator vector separates distinct layers. This structure supports axiom \emph{(SA1)} and justifies stratified analysis of symmetry-breaking behavior.

\subsection{Symbolic Expansion for Associativity and Layered Permutation Symmetry Checks}

In preparation for the analysis of axioms, we carry out symbolic expansions based on the parametric matrix-defined multiplication governed by constants \( A, B, C, D, E, F \in K \). Let \( a, b, c \in V = K^3 \), with components
\[
a = (a_0, a_1, a_2), \quad b = (b_0, b_1, b_2), \quad c = (c_0, c_1, c_2).
\]

The resulting expressions provide simplified forms of the associator \( u - v \) and the Layered Permutation Symmetry operator \( u - w \). While no formal verification is performed at this stage, these symbolic identities reveal the algebraic structure underlying bracket sensitivity and permutation behavior. They will be used in subsequent sections to verify the axioms both analytically and computationally.

\paragraph{Let \( u := (a * b) * c, \quad v := a * (b * c), \quad w := (a * c) * b \).}

\paragraph{Compute \( u - v := (a * b) * c - a * (b * c) \).}
\begin{align*}
u_0 - v_0 =&\ AE ((a_2 b_1 - a_1 b_2) c_1 + a_1 (b_1 c_2 - b_2 c_1)) \\
&+ BF ((a_2 b_1 - a_1 b_2) c_2 + a_2 (b_1 c_2 - b_2 c_1)) \\
&+ C (F (a_2 b_1 - a_1 b_2) c_1 + E a_2 (b_1 c_2 - b_2 c_1)) \\
&+ D (E (a_2 b_1 - a_1 b_2) c_2 + F a_1 (b_1 c_2 - b_2 c_1)), \\
\\
u_1 - v_1 =&\ B(a_2 c_1 - a_1 c_2) b_2 \\
&+ C (a_2 b_1 - a_1 b_2) c_1 \\
&+ D (b_2 c_1 - b_1 c_2) a_1 \\
&+ E^2 (a_1 c_2 - a_2 c_1) b_2 \\
&+ EF (a_2 c_1 - a_1 c_2) b_1, \\
\\
u_2 - v_2 =&\ A(a_1 c_2 - a_2 c_1) b_1 \\
&+ C (b_1 c_2 - b_2 c_1) a_2 \\
&+ D (a_1 b_2 - a_2 b_1) c_2 \\
&+ F^2 (a_2 c_1 - a_1 c_2) b_1 \\
&+ EF (a_1 c_2 - a_2 c_1) b_2.
\end{align*}

\paragraph{Compute \( u - w := (a * b) * c - (a * c) * b \).}
\begin{align*}
u_0 - w_0 =&\ (b_1 c_2 - b_2 c_1) \left( (D - C) a_0 + (AE + CF) a_1 + (BF + DE) a_2 \right), \\
\\
u_1 - w_1 =&\ (b_2 c_1 - b_1 c_2) \left( 2 E a_0 + (D - EF) a_1 + (B + E^2) a_2 \right), \\
\\
u_2 - w_2 =&\ (b_1 c_2 - b_2 c_1) \left( -2 F a_0 + (A + F^2) a_1 + (C - EF) a_2 \right).
\end{align*}

\subsection{Behavior of the Associator across Proportionality Classes}

Recall that for three vectors \( a, b, c \in V = K^3 \), we define:
\[
u := (a * b) * c, \quad v := a * (b * c), \quad w := (a * c) * b,
\]
and study the associator and Layered Permutation Symmetry (LPS) operator:
\[
\mathrm{Assoc}(a, b, c) := u - v, \quad \mathrm{LPS}(a, b, c) := u - w.
\]

Using the parametric product defined earlier, this difference captures the failure of associativity and reflects how the bracketing affects the output. While the full expressions for \( u \) and \( v \) were derived symbolically, we now focus on analyzing the structure and behavior of the associator under different configurations of inputs.

\paragraph{Case 1: All vectors in the same proportionality class.}

Assume \( a, b, c \in V_\alpha \), where \( V_\alpha := \{ v \in V \mid v_1 = \alpha v_2 \} \). Then for all such vectors:
\[
a_2 b_1 - a_1 b_2 = 0, \quad b_2 c_1 - b_1 c_2 = 0, \quad a_2 c_1 - a_1 c_2 = 0.
\]
In this case, the antisymmetric parts of all pairwise commutators vanish. Hence:
\[
\mathrm{Assoc}(a, b, c) = 0, \quad \mathrm{LPS}(a, b, c) = 0.
\]

\paragraph{Case 2: All vectors from distinct strata.}

Assume that the inputs \( a \in V_\alpha \), \( b \in V_\beta \), and \( c \in V_\gamma \) belong to three different strata, i.e., \( \alpha \ne \beta \ne \gamma \). Then the proportionality conditions fail pairwise:
\[
a_2 b_1 - a_1 b_2 \ne 0, \quad b_2 c_1 - b_1 c_2 \ne 0, \quad a_2 c_1 - a_1 c_2 \ne 0.
\]
As a result:
\begin{itemize}
    \item The operation \( * \) is non-commutative across all pairs.
    \item The associator \( u - v \) is generically nonzero.
    \item The LPS operator \( u - w \) is generically nonzero.
    \item The result of the \( u, v, w \) is generically lies outside all input strata — that is, \( u, v, w \notin V_\alpha \cup V_\beta \cup V_\gamma \), consistent with axiom \emph{(SA4)}.
\end{itemize}

\paragraph{Case 3: Partial alignment of strata.}

Assume that the inputs \( a \in V_\alpha \), \( b, c \in V_\beta \), \( \alpha \ne \beta \). Then:
\[
b * c \in V_\beta, \quad (a * b) * c \in V_\gamma, \quad a * (b * c) \in V_\delta, \quad \text{with } \gamma, \delta \notin \{ \alpha, \beta \}.
\]
\begin{itemize}
    \item The associator \( u - v \) is generically nonzero and reflects these asymmetries.
    \item The LPS operator \( u - w \) is always zero.
\end{itemize}

\paragraph{Case 4: Permutation symmetry of intra-stratum elements.}

Assume \( a \in V_\alpha \), \( b, c, d \in V_\beta \), with \( \alpha \ne \beta \), and define:
\[
g := a * b \in V_\gamma, \quad h := a * d \in V_\delta, \quad \text{where } \gamma, \delta \notin \{\alpha, \beta\}.
\]
We consider the fully left-associated product:
\[
a * b * c * d = (((a * b) * c) * d) = g * c * d.
\]
Now, using the Layered Permutation Symmetry \( \mathrm{LPS}(g, c, d) = 0 \) and \( \mathrm{LPS}(h, b, c) = 0 \), we have:
\[
g * c * d = g * d * c = ((a * b) * d) * c = ((a * d) * b) * c = h * b * c = h * c * b = a * d * c * b.
\]
This chain of equalities demonstrates that all permutations of \( b, c, d \in V_\beta \) acting on \( a \in V_\alpha \) yield the same result.

Hence, the operation \( * \) respects axiom \emph{(SA3)}: when multiple elements from one stratum act associatively on an element from another stratum, the result is invariant under permutations of the acting elements.

\paragraph{Case 5: Bracketing-sensitive transitions across multiple strata.}

Assume \( a \in V_\alpha \), \( b, c, d \in V_\beta \), with \( \alpha \ne \beta \), and let \( g := a * b \in V_\gamma \), where \( \gamma \notin \{\alpha, \beta\} \).

Consider the expression
\[
(a * b) * (c * d) = g * (c * d).
\]
Now examine the alternative bracketing:
\[
a * b * c * d := ((a * b) * c) * d = (g * c) * d.
\]
According to Case 3, the operation \( * \) is not globally associative across these strata. Therefore,
\[
g * (c * d) \ne (g * c) * d,
\]
and consequently,
\[
(a * b) * (c * d) \ne a * b * c * d.
\]

This confirms that the operation \( * \) is bracket-sensitive in the precise sense required by axiom \emph{(SA4)}: different parenthesizations lead to different strata and different outcomes, even when the sequence of elements is the same.

\paragraph{Conclusion.}
The parametric model constructed in the previous sections exhibits precisely the structural behavior postulated by axioms \emph{(SA1)} through \emph{(SA4)}. Within individual strata, the operation is commutative and associative, satisfying \emph{SA1}. Cross-layer products are non-commutative, non-associative, and produce results outside the operand strata, in agreement with \emph{SA2}. The Layered Permutation Symmetry operator vanishes under appropriate configurations, demonstrating the controlled symmetry described by \emph{SA3}. Finally, the explicit analysis of the associator confirms bracketing sensitivity and stratum-breaking behavior when traversing mixed strata, as required by \emph{SA4}. Thus, this model serves as a concrete and consistent realization of a fully stratified algebra, validating the internal consistency and expressive power of the proposed axiom system.

\section{The Linear Case: Higher Dimension}

In the preceding examples, we analyzed stratified algebraic structures over the space \( V = K^3 \). We now turn to higher-dimensional cases. In particular, we show that similar principles can be extended to the space \( V = K^4 \), where the increased dimensionality allows for more intricate interactions and richer algebraic dynamics. As before, we define multiplication via matrix-based structure.

\subsection{Alternative Parametrizations and Non-Uniqueness of Fully Stratified Algebras}

Previously, we constructed two classes of stratified algebras: a simple matrix-based model that satisfied axioms \emph{(SA1)}–\emph{(SA3)}, and a parametric extension that additionally satisfied axiom \emph{(SA4)} by introducing bracket-sensitive behavior across strata. These constructions demonstrated that fully stratified algebras not only exist, but can be realized via explicit matrix-generated operations with carefully chosen coefficients.

We now show that such algebras are not unique: there exist alternative parametrizations of the matrix \( M_a \) that define different multiplication rules, yet still satisfy all four axioms of Stratified Algebra. This illustrates the structural flexibility and expressive richness of the stratified framework.

Consider the following more complex parametric matrix:
\[
M_a =
\begin{pmatrix}
a_0 & A a_1 + E a_2 & B a_2 + C a_1 & -A B a_3 \\
a_1 & a_0 + F a_3 & B a_3 & -B a_2 + D a_1 \\
a_2 & -A a_3 & a_0 + D a_3 & A a_1 + F a_2 \\
a_3 & -a_2 & a_1 & a_0 + (D + F) a_3
\end{pmatrix}
\]
where \( A, B, C, D, E, F \in K \) are fixed parameters.

This matrix defines a new bilinear operation \( * : V \times V \to V \) via \( a * b := M_a \cdot b \), distinct from the previous constructions. As before, we analyze the corresponding structure tensor \( \alpha_{ijk} \), the behavior of the associator and the LPS operator, and the stratification induced by algebraic constraints on vector coordinates.

In this higher-dimensional setting \( V = K^4 \), we introduce strata defined by simultaneous proportionality conditions of the form
\[
v_1 = \alpha' v_2, \quad v_2 = \alpha'' v_3,
\]
for \( \alpha', \alpha'' \in K \cup \{\infty\} \). Each such pair \( (\alpha', \alpha'') \) defines a two-dimensional affine layer (or stratum) in \( V \), forming a disjoint decomposition indexed by elements of the projective square \( \mathbb{P}^1(K) \times \mathbb{P}^1(K) \). This yields a richer stratification structure compared to the single-ratio case in \( K^3 \), and allows us to study the behavior of the operation \( * \) across more intricate inter-stratum interactions.

\subsection{Analytical Properties Within a Single Stratum}

Let \( V = K^4 \), and consider the stratification induced by proportionality constraints of the form
\[
v_1 = \alpha' v_2, \quad v_2 = \alpha'' v_3,
\]
with \( \alpha', \alpha'' \in \mathbb{P}^1(K) \). For each fixed pair \( (\alpha', \alpha'') \), we define the corresponding stratum:
\[
V_{\alpha', \alpha''} := \left\{ v = (v_0, v_1, v_2, v_3) \in K^4 \setminus \{0\} \ \middle|\  v_1 = \alpha' v_2,\ v_2 = \alpha'' v_3 \right\}.
\]

In this subsection, we consider two elements \( a, b \in V_{\alpha', \alpha''} \), i.e., vectors that lie in the same stratum defined by fixed proportionality parameters. Our goal is to investigate the algebraic behavior of the operation \( * \) restricted to such strata.

In particular, we analyze the following properties:

\begin{itemize}
    \item \textbf{Commutativity:} Whether \( a * b = b * a \) holds for all \( a, b \in V_{\alpha', \alpha''} \).
    
    \item \textbf{Stratum Preservation:} Whether the product \( a * b \in V_{\alpha', \alpha''} \), i.e., whether the stratum is closed under multiplication.
    
    \item \textbf{Associativity:} Whether the restricted operation satisfies \( (a * b) * c = a * (b * c) \) for all \( a, b, c \in V_{\alpha', \alpha''} \).
\end{itemize}

This analysis allows us to verify whether each stratum behaves as a locally well-structured algebraic layer, consistent with axiom \emph{(SA1)}. The outcome also provides insight into how local algebraic coherence is embedded within the broader stratified system.

\subsubsection{Commutativity}

From the definition of the stratum \( V_{\alpha', \alpha''} \), each element \( v \in V_{\alpha', \alpha''} \) satisfies the constraints \( v_1 = \alpha' v_2 \) and \( v_2 = \alpha'' v_3 \), and thus \( v_1 = \alpha' \alpha'' v_3 \). Consequently, for any pair \( a, b \in V_{\alpha', \alpha''} \), the triples \( (a_1, a_2, a_3) \) and \( (b_1, b_2, b_3) \) are proportional, which implies the vanishing of all pairwise antisymmetric combinations:
\[
a_1 b_2 - a_2 b_1 = 0, \quad
a_1 b_3 - a_3 b_1 = 0, \quad
a_2 b_3 - a_3 b_2 = 0.
\]
These identities reflect that the non-zero vectors \( a \) and \( b \) lie in the same proportionality class in the \( (v_1, v_2, v_3) \)-subspace, and serve as the algebraic signature of stratum alignment.

We now compute the commutator \( [a, b] := a * b - b * a \) for elements \( a, b \in V_{\alpha', \alpha''} \). The resulting vector has the form:
\[
[a, b] =
\begin{pmatrix}
(C - E)(a_1 b_2 - a_2 b_1) \\
-2B(a_2 b_3 - a_3 b_2) + (D - F)(a_1 b_3 - a_3 b_1) \\
2A(a_1 b_3 - a_3 b_1) - (D - F)(a_2 b_3 - a_3 b_2) \\
2(a_1 b_2 - a_2 b_1)
\end{pmatrix}.
\]
Each coordinate is a linear combination of antisymmetric expressions \( a_i b_j - a_j b_i \), which vanish identically when \( a \) and \( b \) lie in the same stratum. Indeed, from the definition of \( V_{\alpha', \alpha''} \), we have:
\[
a_1 b_2 - a_2 b_1 = 0, \quad a_1 b_3 - a_3 b_1 = 0, \quad a_2 b_3 - a_3 b_2 = 0.
\]
Substituting these identities into the commutator expression immediately yields \( [a, b] = 0 \). Thus, the operation \( * \) is commutative within each stratum \( V_{\alpha', \alpha''} \), as required by axiom \emph{(SA1)}.

\subsubsection{Stratum Preservation}

To verify that the stratum \( V_{\alpha', \alpha''} \) is closed under the operation \( * \), we compute the product \( ab := a * b \) and check whether the resulting vector satisfies the same proportionality conditions.

The explicit form of the product vector \( ab \) is given by:
\[
ab =
\begin{pmatrix}
-A B a_3 b_3 + a_0 b_0 + b_1 (A a_1 + E a_2) + b_2 (B a_2 + C a_1) \\
B a_3 b_2 + a_1 b_0 + b_1 (F a_3 + a_0) + b_3 (-B a_2 + D a_1) \\
- A a_3 b_1 + a_2 b_0 + b_2 (D a_3 + a_0) + b_3 (A a_1 + F a_2) \\
a_1 b_2 - a_2 b_1 + a_3 b_0 + b_3 (a_0 + (D + F) a_3)
\end{pmatrix}.
\]

Using the component-wise expression for \( ab \), we substitute the defining relations for \( a, b \in V_{\alpha', \alpha''} \), namely:
\[
a_1 = \alpha' \alpha'' a_3, \quad a_2 = \alpha'' a_3, \quad
b_1 = \alpha' \alpha'' b_3, \quad b_2 = \alpha'' b_3.
\]

After substitution and simplification, the result satisfies:
\[
(ab)_1 = \alpha' \alpha'' (ab)_3, \quad (ab)_2 = \alpha'' (ab)_3,
\]
which means \( ab \in V_{\alpha', \alpha''} \). Hence, the stratum is preserved under multiplication, and the operation \( * \) is closed within each stratum. This confirms the stratum-level compatibility required by axiom \emph{(SA1)}.

\subsubsection{Associativity}

We now consider the associator \( \mathrm{Assoc}(a, b, c) := u - v \), where
\[
u := (a * b) * c, \quad v := a * (b * c).
\]
After simplifying the expressions for \( u \) and \( v \), assuming that \( b \) and \( c \) lie in the same stratum, we obtain the following component-wise difference:

\begin{align*}
(u - v)_0 &= A(E - F) \cdot c_1 (a_1 b_3 - a_3 b_1)
+ B(C + D) \cdot c_2 (a_3 b_2 - a_2 b_3) \\
&\quad + CF \cdot (a_3 b_1 c_2 - a_1 b_2 c_3)
+ DE \cdot (a_3 b_2 c_1 - a_2 b_1 c_3), \\[1ex]
(u - v)_1 &= DF \cdot (a_3 b_3 c_1 - a_1 b_3 c_3)
+ (E - F)(a_2 b_1 - a_1 b_2) c_1, \\[1ex]
(u - v)_2 &= DF \cdot (a_3 b_3 c_2 - a_2 b_3 c_3)
+ (C + D)(a_1 b_2 - a_2 b_1) c_2, \\[1ex]
(u - v)_3 &= (C + D)(a_1 b_2 c_3 - a_3 b_1 c_2)
+ (E - F)(a_2 b_1 c_3 - a_3 b_2 c_1).
\end{align*}

This expression is generically nonzero, and will be relevant in later subsections. However, in the present subsection, we focus on the case where all three vectors \( a, b, c \in V_{\alpha', \alpha''} \), i.e., lie in the same stratum. Under this condition, all cross terms vanish due to proportionality:
\[
a_1 = \alpha' \alpha'' a_3, \quad a_2 = \alpha'' a_3, \quad
b_1 = \alpha' \alpha'' b_3, \quad b_2 = \alpha'' b_3, \quad
c_1 = \alpha' \alpha'' c_3, \quad c_2 = \alpha'' c_3,
\]
which implies:
\[
a_3 b_1 c_2 - a_1 b_2 c_3 = \alpha' \alpha'' \alpha'' a_3 b_3 c_3 - \alpha' \alpha'' \alpha'' a_3 b_3 c_3 = 0,
\]
and similarly for all other antisymmetric or mixed triple terms.

Therefore, \( \mathrm{Assoc}(a, b, c) = 0 \) whenever \( a, b, c \in V_{\alpha', \alpha''} \), confirming that the operation \( * \) is associative within each stratum. This is in full agreement with axiom \emph{(SA1)}.

\subsection{Cross-Layer Asymmetry and Layered Permutation Symmetry}

We now turn to the analysis of axioms \emph{(SA2)} and \emph{(SA3)}, which govern cross-stratum behavior of the operation \( * \).

\subsubsection{Cross-Layer Asymmetry.}

Let \( a \in V_{\alpha', \alpha''} \), \( b \in V_{\beta', \beta''} \), with \( (\alpha', \alpha'') \ne (\beta', \beta'') \), i.e., \( a \) and \( b \) lie in different strata. In this case, axiom \emph{(SA2)} asserts that the product \( a * b \) lies outside both operand strata, and the operation is non-commutative and non-associative across layers.

This condition is satisfied in a straightforward manner: the analytic expressions for \( a * b \) and \( b * a \) are structurally different due to the asymmetry of the matrix \( M_a \cdot b \) versus \( M_b \cdot a \). Therefore, \( a * b \ne b * a \) in general, and cross-layer asymmetry holds.

\subsubsection{Layered Permutation Symmetry}

We now consider the Layered Permutation Symmetry (LPS) operator, defined as:
\[
\mathrm{LPS}(a, b, c) := u - w, \quad \text{where} \quad u := (a * b) * c, \quad w := (a * c) * b.
\]

After simplifying the expressions for \( u \) and \( w \), it is straightforward to observe that each component of the resulting vector \( u - w \) contains a common factor of the form \( b_i c_j - b_j c_i \) for some \( i, j \in \{1, 2, 3\} \). These antisymmetric terms vanish whenever \( b \) and \( c \) lie in the same stratum.

As a concrete illustration, the first component of the operator \( \mathrm{LPS}(a, b, c) \) can be expanded as:
\begin{align*}
(u - w)_0 &= 
2AB\, a_1 (b_3 c_2 - b_2 c_3) +
2AB\, a_2 (b_1 c_3 - b_3 c_1) +
2AB\, a_3 (b_2 c_1 - b_1 c_2) \\
&\quad + AD\, a_1 (b_3 c_1 - b_1 c_3) +
AE\, a_1 (b_3 c_1 - b_1 c_3) +
BC\, a_2 (b_2 c_3 - b_3 c_2) \\
&\quad + BF\, a_2 (b_3 c_2 - b_2 c_3) +
CD\, a_1 (b_3 c_2 - b_2 c_3) +
CF\, a_3 (b_1 c_2 - b_2 c_1) \\
&\quad + C\, a_0 (b_1 c_2 - b_2 c_1) +
DE\, a_3 (b_2 c_1 - b_1 c_2) \\
&\quad + EF\, a_2 (b_3 c_1 - b_1 c_3) +
E\, a_0 (b_2 c_1 - b_1 c_2).
\end{align*}

Indeed, if \( b, c \in V_{\alpha', \alpha''} \), their coordinates satisfy the proportionality relations:
\[
b_1 = \alpha' \alpha'' b_3, \quad b_2 = \alpha'' b_3, \quad \text{and similarly for } c,
\]
which imply:
\[
b_1 c_2 - b_2 c_1 = 0, \quad b_1 c_3 - b_3 c_1 = 0, \quad b_2 c_3 - b_3 c_2 = 0.
\]
Therefore, all antisymmetric combinations \( b_i c_j - b_j c_i \) vanish, and we conclude that:
\[
\mathrm{LPS}(a, b, c) = 0,
\]
regardless of which stratum the vector \( a \) belongs to.

Thus, axiom \emph{(SA3)} is satisfied: the action of two elements from a common stratum on an external vector is symmetric under their permutation in left-associated products.

\subsection{Stratum-Bracketing Sensitivity}

As shown earlier in the associativity analysis, the operation \( * \) is not globally associative when the operands lie in different strata. In particular, if \( a \in V_\alpha \) and \( b, c \in V_\beta \) with \( \alpha \ne \beta \), then the associator \( \mathrm{Assoc}(a, b, c) \) is generally nonzero. This violates global associativity and confirms the bracket sensitivity postulated in axiom \emph{(SA4)}.

\subsubsection{Bracketing-sensitive transitions across multiple strata.}

Assume \( a \in V_\alpha \), \( b, c, d \in V_\beta \), with \( \alpha \ne \beta \), and define \( g := a * b \in V_\gamma \), where \( \gamma \notin \{ \alpha, \beta \} \).
\[
a * b * c * d := ((a * b) * c) * d = (g * c) * d.
\]

As shown in previous computations, the operation \( * \) is not associative across strata, so we have:
\[
g * (c * d) \ne (g * c) * d,
\]
which implies:
\[
(a * b) * (c * d) \ne a * b * c * d.
\]

This confirms that the output of the product depends on the bracketing structure, not merely the sequence of inputs.

\paragraph{Conclusion.}
We have thus analytically demonstrated that the matrix \( M_a \) defines a bilinear operation satisfying all four axioms of Stratified Algebra. This confirms that it is possible to construct stratified algebras through various methods, including parametrized matrix formulations, to meet specific algebraic constraints. Although this work does not propose a universal framework for generating all possible stratified algebras with arbitrary properties and stratifications, it provides constructive evidence of their existence and structural richness.

In this sense, the present work lays the foundation for a new branch of algebraic theory—one that explicitly incorporates stratification, cross-layer dynamics, and bracketing-sensitive computation into the algebraic paradigm. We now turn to the next section to explore the potential applicability of this framework and to examine how its structural principles may offer practical value alongside theoretical insights.

\section{The Nonlinear Case: Bilinear and Linear Terms}

In previous sections, we examined stratified algebras defined via linear matrix-based multiplication, including both standard and higher-dimensional constructions. We now turn to a more general class of operations in which the multiplication \( * : V \times V \to V \) is not linear in either argument, but instead consists of a combination of bilinear and linear components.

\subsection{A Nonlinear Example with Mixed Terms}

Consider the operation \( * : V \times V \to V \), where \( V = K^3 \), defined component-wise by the following rule:
\begin{align*}
(ab)_0 &= a_0 + b_0 + a_0 b_0 + A a_1 b_1 + C a_2 b_1 + B a_2 b_2 + D a_1 b_2, \\
(ab)_1 &= a_1 + b_1 + a_0 b_1 + a_1 b_0 + E a_2 b_1 - E a_1 b_2, \\
(ab)_2 &= a_2 + b_2 + a_0 b_2 + a_2 b_0 + F a_1 b_2 - F a_2 b_1,
\end{align*}
where \( A, B, C, D, E, F \in K \) are fixed parameters.

This operation is not derived from any matrix action on \( b \), and the form \( a * b = M_a \cdot b \) does not apply. Instead, the multiplication combines:
\begin{itemize}
    \item \textbf{Bilinear terms} (e.g., \( a_i b_j \)) capturing pairwise interaction between components,
    \item \textbf{Linear terms} (e.g., \( a_i \), \( b_j \)) accounting for individual contributions.
\end{itemize}

\subsection{Tensor and Linear Representation}

This operation can be decomposed analytically as:
\[
(a * b)_k = \sum_{i,j} \alpha_{ijk} a_i b_j + \sum_i \lambda_{ik} a_i + \sum_j \mu_{jk} b_j,
\]
where the first term captures bilinear structure, and the remaining terms represent linear contributions from each operand.

This format generalizes matrix-based constructions and is especially suitable for stratified algebras where the operation must capture nonlinear or asymmetric effects across layers.

\subsection{Structural Features and Complexity}

Compared to linear models, this bilinear-plus-linear scheme exhibits several key differences:
\begin{itemize}
    \item It is not invertible in closed form — solving \( a * b = v \) is equivalent to solving a system of nonlinear equations.
    \item It does not admit a uniform representation via matrix multiplication or linear maps.
    \item Stratification may be more subtle, requiring geometric or algebraic invariants (e.g., vanishing of certain antisymmetric bilinear forms) to characterize strata.
\end{itemize}

Symbolic analysis confirms that the nonlinear operation defined in this section satisfies all four axioms \emph{(SA1)}–\emph{(SA4)} of Stratified Algebra.

This highlights the flexibility of the stratified framework and demonstrates that fully consistent algebraic systems can be constructed even when the underlying operation is nonlinear, non-matrix-based, and asymmetric.

\section{The Case of Finite Fields: \texorpdfstring{$K = \mathbb{F}_p$}{K = Fp}}

In this section, we examine the behavior of the nonlinear operation defined previously when the underlying field \( K \) is a finite field, specifically \( \mathbb{F}_p \) for a prime \( p \). This setting brings several distinct advantages and challenges, both computational and structural.

\subsection{Motivation: Concreteness and Computability}

When \( K = \mathbb{F}_p \), the space \( V = K^n \) becomes a finite set, making the stratified algebra a \emph{finite algebraic structure}. This has several immediate consequences:

\begin{itemize}
    \item The operation \( * : V \times V \to V \) can be explicitly computed and tabulated for small \( n \) and \( p \), allowing for exhaustive enumeration, verification of axioms \emph{(SA1)}–\emph{(SA4)}, and direct construction of strata.
    \item Symbolic complexity is reduced: algebraic identities and properties that are hard to prove over arbitrary fields may become trivial or can be checked algorithmically.
    \item One can pose and answer combinatorial questions like: How many distinct strata exist? What is the maximal chain length of strata under inclusion or transition?
\end{itemize}

\subsubsection{Applications in Computer Science and Cryptography}

Finite fields play a foundational role in applied mathematics, particularly in:

\begin{itemize}
    \item \textbf{Cryptography:} Many cryptographic primitives are based on operations over \( \mathbb{F}_p \) or its extensions. Stratified algebraic systems with nonlinear or noninvertible operations may inspire novel constructions in asymmetric cryptography or hash design.
    \item \textbf{Coding theory:} Nonlinear algebraic structures over finite fields may offer new ways to define error-correcting codes, particularly when standard linear code assumptions are insufficient.
    \item \textbf{Automata and symbolic computation:} The finite-state behavior of \( V \) under the operation \( * \) may be modeled via automata or decision trees, opening paths to algorithmic stratification analysis.
    \item \textbf{Algebraic dynamics and visualization:} Stratified orbits, transition graphs, and monodromy-like structures provide tools for understanding and visualizing long-term behavior under repeated multiplication, revealing both regularities and complexity in the algebraic evolution.
\end{itemize}

\subsubsection{Behavioral Differences and Finite-Field Specific Questions}

The transition from infinite to finite fields introduces qualitative changes in algebraic behavior:

\begin{itemize}
    \item \textbf{Periodicity and saturation:} Iterated operations \( a^{(*k)} \) may stabilize, cycle, or collapse—phenomena unique to finite domains.
    \item \textbf{Non-generic behavior:} Over finite fields, ``almost all'' does not hold. Specific combinations of parameters \( A,\dots,F \) may lead to degenerate or structurally rich configurations.
    \item \textbf{Strata enumeration:} One can pose precise, finite questions such as:
    \[
    \text{How many distinct strata exist in } V = \mathbb{F}_p^3 \text{ under a given } *?
    \]
    \item \textbf{Transition chains:} What is the maximal sequence of strata?
\end{itemize}

This finite-field viewpoint brings the theory of stratified algebras closer to concrete computation and potential applications, offering a testing ground for structural insights and enabling algorithmic exploration of properties difficult to assess in the infinite case.

\subsection{A Nonlinear Stratified Algebra over a Finite Field}

We now consider the nonlinear three-dimensional operation from the previous section, but defined over a finite field \( K = \mathbb{F}_p \), where \( p \) is a prime. This setting allows for complete enumeration of the algebraic structure and explicit analysis of strata.

Preliminary symbolic and computational analysis indicates that all four stratified algebra axioms \emph{(SA1)}–\emph{(SA4)} are satisfied in the general case over finite fields \( \mathbb{F}_p \), provided the parameters \( A, B, C, D, E, F \in \mathbb{F}_p \) are chosen generically—that is, without introducing degenerate cancellations or symmetries.

More precisely, the axioms tend to hold for “generic” parameter values when \( p \) is sufficiently large to avoid accidental collisions or low-order identities among terms. For very small primes (e.g., \( p = 3 \) or \( p = 5 \)), pathological behaviors or violations may occur due to limited degrees of freedom and modular interactions. However, when \( p \geq 19 \), random or moderately structured choices of parameters typically yield well-formed stratified algebras.

Under these generic conditions, the space \( V = \mathbb{F}_p^3 \) is partitioned into exactly \( p + 1 \) distinct strata, and the stratification exhibits stable algebraic behavior amenable to both enumeration and symbolic classification.

\paragraph{Stratum 0 (the base stratum).} This stratum contains \( p^2 - 2 \) vectors. It is formed under the following pattern:
\begin{itemize}
    \item Vectors of the form \([0, 0, z]\) and \([1, 0, z]\) for \( z = 1, \dots, p-1 \),
    \item Plus vectors \([x, 0, z]\) for \( x = 2, \dots, p-1 \), and \( z = 0, \dots, p-1 \).
\end{itemize}

That is, the second coordinate is always zero; the first coordinate ranges from 0 to \( p-1 \), but the allowed values of the third coordinate depend on the first. When \( x = 0 \) or \( 1 \), the third coordinate avoids zero. When \( x \geq 2 \), it ranges over all of \( \mathbb{F}_p \).

For example, when \( p = 5 \), this stratum includes:
\begin{verbatim}
clust[0]
[[0, 0, 1], [0, 0, 2], [0, 0, 3], [0, 0, 4],
 [1, 0, 1], [1, 0, 2], [1, 0, 3], [1, 0, 4],
 [2, 0, 0], [2, 0, 1], [2, 0, 2], [2, 0, 3], [2, 0, 4],
 [3, 0, 0], [3, 0, 1], [3, 0, 2], [3, 0, 3], [3, 0, 4],
 [4, 0, 0], [4, 0, 1], [4, 0, 2], [4, 0, 3], [4, 0, 4]]
\end{verbatim}

\paragraph{Strata 1 through \( p \).} Each of these strata contains exactly \( (p-1) \cdot p \) elements, arranged as:
\begin{itemize}
    \item \( p \) blocks, each indexed by a fixed value of the first coordinate \( x \in \mathbb{F}_p \),
    \item Within each block, the second coordinate cycles through all nonzero values \( 1, \dots, p-1 \),
    \item The third coordinate follows a \emph{cyclic permutation pattern}, fixed for each stratum and possibly dependent on the parameters \( A, \dots, F \) of the operation.
\end{itemize}

For example, one such stratum at \( p = 5 \) might look like:
\begin{verbatim}
clust[3]
[[0, 1, 2], [0, 2, 4], [0, 3, 1], [0, 4, 3],
 [1, 1, 2], [1, 2, 4], [1, 3, 1], [1, 4, 3],
 [2, 1, 2], [2, 2, 4], [2, 3, 1], [2, 4, 3],
 [3, 1, 2], [3, 2, 4], [3, 3, 1], [3, 4, 3],
 [4, 1, 2], [4, 2, 4], [4, 3, 1], [4, 4, 3]]
\end{verbatim}

For larger \( p \), the cyclic patterns within each stratum may vary more significantly and depend intricately on the bilinear coefficients of the operation. This structure highlights the rich combinatorial and algebraic behavior of stratified systems over finite fields and opens opportunities for further classification and analysis.

\subsection{Orbit Dynamics and Stratified Transitions}

Further structural analysis reveals intriguing dynamical patterns when examining products of vectors from different strata. Let \( p \in S_i \) and \( q \in S_j \), where \( S_i \neq S_j \) are distinct strata of \( \mathbb{F}_p^3 \). Then, under the stratified algebra operation \( * \), the product \( p * q \) never remains in \( S_i \) or \( S_j \). That is, the result moves to a new stratum \( S_k \), distinct from both \( S_i \) and \( S_j \).

\paragraph{Cyclic Drift.} Repeated multiplication by a fixed element \( q \in S_j \) causes the result to cycle through various strata in a non-repeating orbit, before possibly returning (or stabilizing) after a finite number of steps. For example, if \( p \in S_3 \) and \( q \in S_4 \), one might observe:
\[
p * q \in S_7,\quad (p * q) * q \in S_{16},\quad (((p * q) * q) * q) \in S_{10},\quad \dots
\]
and eventually reaching:
\[
p * q^{(5)} \in S_0,\quad p * q^{(6)} \in S_5,\quad \dots
\]

\paragraph{Combinatorial Branching with Multiple Inputs.} When using a sequence of distinct multipliers \( q_1, q_2, \dots, q_k \in S_j \), the behavior becomes substantially more complex:
\[
p * q_1 \in S_{11},\quad (p * q_1) * q_2 \in S_6,\quad \dots,\quad (((p * q_1) * q_2) \dots * q_5) \in S_{14},\quad \dots
\]
The paths taken by such products depend intricately on the order and identity of the \( q_i \), and exhibit features of sensitive dependence and symbolic chaos.

\paragraph{Permutation Invariance.} Despite the complexity of paths, axiom \emph{(SA3)} guarantees that the final result of such a sequence is invariant under permutation of the \( q_i \). That is:
\[
p * q_1 * q_2 * q_3 * \dots * q_k = p * \pi(q_1, \dots, q_k)
\]
for any permutation \( \pi \). This suggests a rich algebraic structure under the hood: the result of multiplication is symmetric with respect to the multiset of inputs, though the \emph{path} taken through intermediate strata is not.

Thus, even in a finite and well-defined algebra, complex transition dynamics and nonlinear interaction across strata reveal a surprisingly rich space for modeling, classification, and theoretical investigation.

\subsection{Toward a Cryptographic Application}

As a preliminary direction for future work, we outline a potential cryptographic scheme based on the stratified algebra structure described above. The scheme exploits the nonlinearity, stratification, and permutation invariance of the multiplication operation.

\paragraph{Key exchange idea (sketch).}
\begin{itemize}
    \item One party (Alice) selects a stratum \( S_i \) and chooses an initial element \( p \in S_i \). She then computes a chained product:
    \[
    S_1 = p * q_1 * q_7 * q_8 * \dots
    \]
    This tuple \( (p, S_1) \) serves as a public key or a dynamic session identifier. It is shared openly with the second party (Bob).

    \item Bob receives \( p \) and independently computes his own chained product:
    \[
    S_2 = p * q_2 * q_2 * q_5 * \dots
    \]
    He then transmits \( S_2 \) back to Alice.

    \item Crucially, the structure of the operation ensures that neither party can reconstruct the other's full sequence, due to:
    \begin{itemize}
        \item Non-invertibility and nonlinearity of the operation,
        \item Unknown length and composition of the multiplier sequences.
    \end{itemize}

    \item Now, each party completes the shared key exchange by continuing the computation:
    \[
    S_{12} = S_1 * q_2 * q_2 * q_5 * \dots,\quad
    S_{21} = S_2 * q_1 * q_7 * q_8 * \dots
    \]
    Due to the permutation invariance of the operation \emph{with respect to the sequence of multipliers from a fixed stratum \( S_q \) applied to a fixed base element \( p \in S_p \)}, both parties arrive at the same result:
    \[
    S_{12} = S_{21}
    \]
    That is, although the intermediate strata visited during the product may differ depending on the order of the multipliers, the final outcome remains invariant under permutation of the \( q_i \) drawn from the same stratum and applied to the same initial element \( p \).
\end{itemize}

This mechanism offers a novel approach to key agreement, with a potentially high resistance to reverse engineering due to the inherent algebraic complexity. A more formal construction, analysis of security properties, and implementation feasibility will be presented in a separate publication.

\section*{Conclusion}

In this work, we introduced the concept of \emph{Stratified Algebra}—an algebraic framework based on decomposing a vector space into disjoint strata, within which operations are locally structured (e.g., associative and commutative), but globally asymmetric, non-associative, and semantically dynamic. This approach challenges the classical assumption of global uniformity in algebraic systems and provides an alternative formalism where the algebraic behavior is governed by context-sensitive strata interactions.

We developed a precise system of axioms—\emph{(SA1)} to \emph{(SA4)}—capturing intra-layer coherence, inter-layer asymmetry, layered permutation symmetry, and bracketing sensitivity. These axioms serve as a foundational structure for studying layered dynamics in algebraic systems, allowing for the formalization of nontrivial propagation, influence, and transformation across strata.

Throughout the paper, we constructed several explicit models that satisfy subsets or the full set of axioms. These included:
\begin{itemize}
    \item A linear matrix-based model stratified by proportionality classes, satisfying axioms \emph{(SA1)}–\emph{(SA3)}.
    \item A parametric model exhibiting full bracket sensitivity and asymmetry, satisfying all axioms \emph{(SA1)}–\emph{(SA4)}.
    \item A nonlinear model combining bilinear and linear terms, not reducible to matrix form, yet still satisfying the full axiom system.
\end{itemize}

We demonstrated analytically and symbolically that these models lead to well-defined stratified algebras, and that algebraic operations can induce semantic transitions across strata, revealing rich internal structure beyond classical frameworks.

\medskip

\noindent\textbf{Outlook.}
The present work opens a new direction in algebra, where the core objects are no longer globally uniform structures, but stratified systems with internal layers and asymmetries. Potential applications include:
\begin{itemize}
    \item symbolic and structural computation,
    \item algebraic models of hierarchical and contextual semantics,
    \item non-associative and nonlinear dynamics in abstract systems,
    \item logic, information propagation, and neural computation with layer-dependent interactions.
\end{itemize}

In future research, we plan to explore general classification theorems for stratified algebras, study morphisms between different stratified systems, and investigate categorical and functorial perspectives. We also aim to develop computational tools to automate stratification detection and axiom verification.

\medskip

\noindent By rethinking the foundations of algebra through the lens of stratification, this work invites a reevaluation of algebraic structure as a context-driven, layered process rather than a globally rigid form.


\newpage
\appendix
\section*{Appendix A: Associativity Verification Script}

The associativity of the structure tensor \( \alpha_{ijk} \), corresponding to the matrix-defined multiplication, was verified computationally using the following Python script. The script exhaustively checks the associativity condition
\[
\sum_r \alpha_{ijr} \alpha_{rkl} = \sum_s \alpha_{jks} \alpha_{isl}
\quad \text{for all } i, j, k, l \in \{0,1,2\},
\]
as dictated by the general criterion for bilinear operations.

\begin{verbatim}
# Structural constants alpha_{ijk}
alpha = {
    (0, 0, 0): 1,
    (1, 1, 0): 1, (1, 2, 0): 1, (2, 1, 0): 1, (2, 2, 0): 1,
    (1, 0, 1): 1, (0, 1, 1): 1, (2, 1, 1): 1, (1, 2, 1): -1,
    (2, 0, 2): 1, (2, 1, 2): -1, (0, 2, 2): 1, (1, 2, 2): 1
}

def is_associative():
    for i in range(3):
        for j in range(3):
            for k in range(3):
                for l in range(3):
                    left = sum(
                        alpha.get((i, j, r), 0) *
                        alpha.get((r, k, l), 0)
                        for r in range(3)
                    )
                    right = sum(
                        alpha.get((j, k, s), 0) *
                        alpha.get((i, s, l), 0)
                        for s in range(3)
                    )
                    if left != right:
                        print(f"Not associative for "
                              f"(i,j,k,l)=({i},{j},{k},{l}): "
                              f"{left} != {right}")
                        return False
    print("The operation is associative.")
    return True
\end{verbatim}

\paragraph{Output:}
\begin{verbatim}
The operation is associative.
True
\end{verbatim}

\section*{Appendix B: Associativity Check with Parameters}

To verify associativity for a generalized parametric version of the product operation
\[
(a * b)_k = \sum_{i,j} \alpha_{ijk} a_i b_j,
\]
we define the structural constants \( \alpha_{ijk} \) in terms of symbolic parameters \( A, B, C, D, E, F \in K \). The values used in this test are:
\[
A = 16, \quad B = 8, \quad C = 5, \quad D = 3, \quad E = 7, \quad F = 11.
\]

The corresponding Python script exhaustively checks the associativity identity:
\[
(a * b) * c = a * (b * c)
\]
for all combinations of indices \( i, j, k, l \in \{0,1,2\} \), by comparing the two contraction patterns:
\[
\sum_r \alpha_{ijr} \alpha_{rkl} \quad \text{vs.} \quad \sum_s \alpha_{jks} \alpha_{isl}.
\]

\begin{verbatim}
# Parameters
A, B, C, D, E, F = 16, 8, 5, 3, 7, 11

# Build structural constants alpha_{ijk}
alpha = {}

# u0 = a0*b0 + A*a1*b1 + B*a2*b2 + C*a2*b1 + D*a1*b2
alpha[(0, 0, 0)] = 1
alpha[(1, 1, 0)] = A
alpha[(2, 2, 0)] = B
alpha[(2, 1, 0)] = C
alpha[(1, 2, 0)] = D

# u1 = a1*b0 + a0*b1 + E*a2*b1 - E*a1*b2
alpha[(1, 0, 1)] = 1
alpha[(0, 1, 1)] = 1
alpha[(2, 1, 1)] = E
alpha[(1, 2, 1)] = -E

# u2 = a2*b0 + a0*b2 + F*a2*b1 - F*a1*b2
alpha[(2, 0, 2)] = 1
alpha[(0, 2, 2)] = 1
alpha[(2, 1, 2)] = F
alpha[(1, 2, 2)] = -F

def is_associative():
    failures = []
    for i in range(3):
        for j in range(3):
            for k in range(3):
                for l in range(3):
                    left = sum(
                        alpha.get((i, j, r), 0) *
                        alpha.get((r, k, l), 0)
                        for r in range(3)
                    )
                    right = sum(
                        alpha.get((j, k, s), 0) *
                        alpha.get((i, s, l), 0)
                        for s in range(3)
                    )
                    if left != right:
                        failures.append((i, j, k, l, left, right))
    if not failures:
        print("The operation is associative.")
        return True
    else:
        print(f"The operation is not associative. "
              f"{len(failures)} mismatches found:")
        for (i, j, k, l, left, right) in failures:
            print(f"  (i,j,k,l)=({i},{j},{k},{l}): {left} != {right}")
        return False

is_associative()
\end{verbatim}

\paragraph{Output.}
\begin{verbatim}
The operation is not associative. 16 mismatches found:
  (i,j,k,l)=(1,1,2,0): 0 != -145
  (i,j,k,l)=(1,1,2,1): 0 != 80
  (i,j,k,l)=(1,1,2,2): 16 != 121
  (i,j,k,l)=(1,2,1,0): -167 != 145
  (i,j,k,l)=(1,2,1,1): -74 != -72
  (i,j,k,l)=(1,2,2,0): -109 != 0
  (i,j,k,l)=(1,2,2,1): 49 != 8
  (i,j,k,l)=(1,2,2,2): 80 != 0
  (i,j,k,l)=(2,1,1,0): 167 != 0
  (i,j,k,l)=(2,1,1,1): 82 != 0
  (i,j,k,l)=(2,1,1,2): 121 != 16
  (i,j,k,l)=(2,1,2,0): 109 != -123
  (i,j,k,l)=(2,1,2,2): -72 != -74
  (i,j,k,l)=(2,2,1,0): 0 != 123
  (i,j,k,l)=(2,2,1,1): 8 != 49
  (i,j,k,l)=(2,2,1,2): 0 != 82
False
\end{verbatim}

\section*{Appendix C: Symbolic Verification with \texttt{SymPy}}

This appendix presents a Python/SymPy script used to verify core algebraic properties of the parametric multiplication \( a * b := M_a \cdot b \), where \( M_a \) is a matrix depending linearly on the coordinates of \( a \in K^3 \), with symbolic parameters \( A, B, C, D, E, F \in K \). The code computes:

\begin{itemize}
    \item the symbolic form of the product \( a * b \),
    \item the commutator \( [a,b] := a * b - b * a \),
    \item the associator \( \mathrm{Assoc}(a, b, c) := (a * b) * c - a * (b * c) \),
    \item the LPS operator \( \mathrm{LPS}(a, b, c) := (a * b) * c - (a * c) * b \).
\end{itemize}

The symbolic output confirms algebraic properties discussed in the main text.

\begin{verbatim}
import sympy
from sympy import symbols, Matrix

# Parameters
A, B, C, D, E, F = symbols('A B C D E F')

# Coordinates of a, b, c
a0, a1, a2 = symbols('a0 a1 a2')
b0, b1, b2 = symbols('b0 b1 b2')
c0, c1, c2 = symbols('c0 c1 c2')

# Generic matrix entries
m0, m1, m2 = symbols('m0 m1 m2')
M = Matrix([
    [m0, A * m1 + C * m2, D * m1 + B * m2],
    [m1, m0 + E * m2, -E * m1],
    [m2, F * m2, m0 - F * m1]
])

# Vectors
a = Matrix([a0, a1, a2])
b = Matrix([b0, b1, b2])
c = Matrix([c0, c1, c2])

# Evaluate M_a
subs_a = [(m0, a0), (m1, a1), (m2, a2)]
M_a = M.subs(subs_a)

# Evaluate M_b
subs_b = [(m0, b0), (m1, b1), (m2, b2)]
M_b = M.subs(subs_b)

# Compute products
ab_vec = M_a * b
ba_vec = M_b * a

# M_ab for associator
subs_ab = [(m0, ab_vec[0]), (m1, ab_vec[1]), (m2, ab_vec[2])]
M_ab = M.subs(subs_ab)

abc_vec = M_ab * c
a_bc_vec = M_a * (M_b * c)

# M_ac for LPS
ac_vec = M_a * c
subs_ac = [(m0, ac_vec[0]), (m1, ac_vec[1]), (m2, ac_vec[2])]
M_ac = M.subs(subs_ac)
acb_vec = M_ac * b

# Commutator
ab_ba = sympy.simplify(ab_vec - ba_vec)
sympy.pretty_print(ab_ba)

# Associator component
abc_a_bc = sympy.simplify(abc_vec - a_bc_vec)
sympy.pretty_print(abc_a_bc[0])

# LPS component
abc_acb = sympy.simplify(abc_vec - acb_vec)
sympy.pretty_print(abc_acb[0])
\end{verbatim}

This script enables reproducible symbolic verification of all key algebraic properties discussed in this work.

\end{document}